\title{A Note on the Multivariate CLT and Convergence of L\'evy Processes at Long and Short Times}
\author{Michael Grabchak\footnote{Email address: mgrabcha@uncc.edu}, {\it University of North Carolina Charlotte}}
\begin{document}
\newtheorem{prop}{Proposition}
\newtheorem{thrm}{Theorem}
\newtheorem{defn}{Definition}
\newtheorem{cor}{Corollary}
\newtheorem{lemma}{Lemma}

\newcommand{\rd}{\mathrm d}
\newcommand{\rE}{\mathrm E}
\newcommand{\ts}{TS^p_\alpha}
\newcommand{\tr}{\mathrm{tr}}
\newcommand{\iid}{\stackrel{\mathrm{iid}}{\sim}}
\newcommand{\eqd}{\stackrel{d}{=}}
\newcommand{\cond}{\stackrel{d}{\rightarrow}}
\newcommand{\conv}{\stackrel{v}{\rightarrow}}
\newcommand{\conw}{\stackrel{w}{\rightarrow}}
\newcommand{\conp}{\stackrel{p}{\rightarrow}}
\newcommand{\confdd}{\stackrel{fdd}{\rightarrow}}
\newcommand{\plim}{\mathop{\mathrm{p\mbox{-}lim}}}
\newcommand{\dlim}{\operatorname*{d-lim}}

\maketitle

\begin{abstract}

We show that a necessary and sufficient condition for the sum of iid random vectors to converge (under appropriate shifting and scaling) to a multivariate Gaussian distribution is that the truncated second moment matrix is slowly varying at infinity.  This is more natural than the standard conditions, and allows for the possibility that the limiting Gaussian distribution is degenerate (so long as it is not concentrated at a point). We also give necessary and sufficient conditions for a $d$-dimensional L\'evy process to converge (under appropriate shifting and scaling) to a multivariate Gaussian distribution as time approaches zero or infinity.\\

\noindent Mathematics Subject Classification (2010): 60F05; 60G50; 60G51\\

\noindent Keywords: Multivariate Central Limit Theorem; long and short time behavior; L\'evy Processes
\end{abstract}

\section{Introduction}

Let $X$ be a random variable with distribution $\mu$ on $\mathbb R^d$. If $d=1$, a necessary and sufficient condition for $X$ to belong to the domain of attraction of the Gaussian is that the truncated second moment function
$$
f(t) = \int_{|x|\le t} x^2 \mu(\rd x)
$$
is slowly varying (see e.g.\ \cite{Feller:1971}).  However, we have found no similar condition in the literature for the case $d\ge2$.  Instead, conditions are given in terms of regular variation of certain quadratic forms, see \cite{Rvaceva:1962}. For this reason, conditions are only given for convergence to nondegenerate Gaussian distributions.  We give necessary and sufficient conditions for $X$ to belong to the domain of attraction of any (possibly degenerate) Gaussian distribution in terms of the slow variation of its truncated second moment matrix. We also give the corresponding result for the long and short time behavior of L\'evy processes. 

A L\'evy process is a continuous time process that generalizes summation of iid random variables, see \cite{Sato:1999}. Assume that $\mu$ is an infinitely divisible distribution and let $\{X_t:t\ge0\}$ be a L\'evy processes with $X_1\sim\mu$. The long (short) time behavior of this process is the weak limit of $X_t$, under appropriate shifting and scaling, as $t$ approaches infinity (zero).  An alternate, but equivalent, definition, in terms of weak convergence of certain time rescaled processes, is also sometimes used (see e.g.\ \cite{Rosinski:2007}).  Necessary and sufficient conditions for the long and short time behavior to be an infinite variance stable distribution are given in \cite{Grabchak:2013}.  In this paper, we give necessary and sufficient conditions for the long and short time behavior of the process to be Gaussian.

In the next section we introduce our notation and give some background.  In particular we define regular variation for matrix-valued functions.  While regular variation of invertible matrix-valued functions has been studied (see e.g. \cite{Balkema:1973} or \cite{Meerschaert:Scheffler:2001}), our definition, which is valid even for certain non-invertible matrix-valued functions, appears to be new. In Section 3 we state our main results and in Section \ref{sec: Proofs} we give the proofs.

\section{Preliminaries}

Let $\mathbb R^d$ be the space of $d$-dimensional column vectors of real numbers, let $|\cdot|$ be the usual norm on $\mathbb R^d$, and let $\mathfrak B(\mathbb R^d)$ denote the Borel sets on $\mathbb R^d$.  For $x\in\mathbb R^d$ we write $x=(x_1,x_2,\dots,x_d)$ and let $x^T$ be the transpose of $x$.  We write $X\sim\mu$ to denote that $X$ is a random variable on $\mathbb R^d$ with distribution $\mu$. For a sequence $X_1,X_2,\dots$ of random variables we write $\dlim X_n$ to denote the limit in distribution. Let $\mathbb R^{d\times d}$ be the collection of all $d\times d$ dimensional matrices with real entries. If $A\in\mathbb R^{d\times d}$ let $\tr A$ be the trace of the matrix $A$. If $f$ and $g$ are real-valued functions and $c\in\{0,\infty\}$, we write $f(t)\sim g(t)$ as $t\to c$ to denote $f(t)/g(t)\to 1$ as $t\to c$.  When dealing with infinity we adopt the following conventions: $1/\infty=0$ and $1/0=\infty$.

For $c\in\{0,\infty\}$ and $\rho\in\mathbb R$, a Borel function $f:(0,\infty)\mapsto(0,\infty)$ is called regularly varying at $c$ with index $\rho$ if
$$
\lim_{x\rightarrow c} \frac{f(tx)}{f(x)}=t^{\rho}.
$$
In this case we write $f\in RV^c_\rho$. If $h(x) = 1/f(1/x)$ then 
\begin{eqnarray}\label{eq: zero equiv inf}
f\in RV^c_\rho \mbox{ if and only if } h\in RV^{1/c}_\rho.
\end{eqnarray}
If $f\in RV^c_\rho$ then $f(x+t)\sim f(t)$ as $t\to c$ (this follows from the Potter Bounds, see e.g.\ Theorem 1.5.6 in \cite{Bingham:Goldie:Teugels:1987}).
If $f\in RV^c_\rho$ with $\rho>0$ and $f^{\leftarrow}(x) = \inf\left\{ y>0: f(y)>x \right\}$ then 
\begin{eqnarray*}
f^{\leftarrow}\in RV_{1/\rho}^c
\end{eqnarray*}
and $f^\leftarrow$ is an asymptotic inverse of $f$ in the sense that
\begin{eqnarray*}
f(f^{\leftarrow}(x)) \sim f^{\leftarrow}(f(x)) \sim x \ \mbox{as} \ x\rightarrow c.
\end{eqnarray*}
When $c=\infty$ this result is given on page 28 of \cite{Bingham:Goldie:Teugels:1987}. The case when $c=0$ can be shown using an extension of those results and \eqref{eq: zero equiv inf}.

We  now introduce a definition of regular variation for matrix valued functions.  While, regular variation of invertible matrix-valued functions is defined in \cite{Balkema:1973} and \cite{Meerschaert:Scheffler:2001}, we need a different definition to allow for regular variation of certain non-invertible matrix-valued functions. 

\begin{defn}
Fix $c\in\{0,\infty\}$ and $\rho\in\mathbb R$. Let $A_\bullet:\mathbb (0,\infty)\mapsto \mathbb R^{d\times d}$ and $B\in \mathbb R^{d\times d}$.
If $\tr A_\bullet \in RV^c_\rho$ and
\begin{eqnarray*}
\lim_{t\rightarrow c}\frac{A_t}{\tr A_t} = B
\end{eqnarray*}
we say that $A_\bullet$ is matrix regularly varying at $c$ with index $\rho$ and limiting matrix $B$. In this case we write $A_\bullet\in MRV^c_\rho(B)$.
\end{defn}

Before proceeding, we review some basic properties of infinitely divisible distributions. Recall that the characteristic function of an infinitely divisible distribution $\mu$ can be written as $\hat\mu(z) = \exp\{C_{\mu}(z)\}$ where
\begin{eqnarray*}
C_{\mu}(z) = -\frac{1}{2}\langle z,Az\rangle + i\langle b,z\rangle + \int_{\mathbb R^d}\left(e^{i\langle z,x\rangle}-1-i\frac{\langle z,x\rangle}{1+|x|^2}\right)M(\rd x)
\end{eqnarray*}
is the cumulant generating function of $\mu$. Here $A$ is a symmetric nonnegative-definite $d\times d$ matrix, $b\in\mathbb R^d$, and $M$ is a L\'evy measure, i.e.\ $M(\{0\})=0$ and 
\begin{eqnarray}\label{eq: integ cond}
\int_{\mathbb R^d}\left(|x|^2\wedge1\right)M(\rd x)<\infty. 
\end{eqnarray}
The measure $\mu$ is uniquely identified by the so called L\'evy triplet $(A,M,b)$ and we write $\mu=ID(A,M,b)$. For more information about infinitely divisible distributions and their associated L\'evy processes see \cite{Sato:1999}.

\section{Main Results}

First, we give necessary and sufficient conditions for the multivariate central limit theorem.

\begin{thrm}\label{thrm: doa normal for CLT}
Let $B\ne0$ be a symmetric nonnegative-definite $d\times d$ matrix, let $\mu$ be a probability measure on $\mathbb R^d$, let $X_1,X_2,\dots\iid \mu$, and let
\begin{eqnarray}\label{eq: trunc second moment func CLT}
A_t = \int_{|x|\le t} xx^T \mu(\rd x)- \int_{|x|\le t} x \mu(\rd x)\int_{|x|\le t} x^T \mu(\rd x).
\end{eqnarray}
There exist non-stochastic $a_n\in(0,\infty)$ and $\xi_n\in\mathbb R^d$ such that
\begin{eqnarray}\label{eq: long time for gaus CLT}
a_n \sum_{i=1}^n X_i-\xi_n \cond N(0,B) \ \mbox{as}\ n\rightarrow \infty
\end{eqnarray}
if and only if $A_\bullet\in MRV^\infty_0(kB)$, where $k=1/\tr B$. Moreover, when this holds $a_\bullet\in RV^c_{-1/2}$ and if $\int_{\mathbb R^d}|x|^2\mu(\rd x)<\infty$ then
\begin{eqnarray}\label{eq: a CLT finite var}
a_n \sim k^{-1/2}\left[\int_{\mathbb R^d}|x|^2\mu(\rd x)-\int_{\mathbb R^d}x^T\mu(\rd x)\int_{\mathbb R^d}x\mu(\rd x)\right]^{-1/2}n^{-1/2}
\end{eqnarray}
and if $\int_{\mathbb R^d}|x|^2\mu(\rd x)=\infty$ then
\begin{eqnarray}\label{eq: a CLT}
a_n\sim k^{-1/2}/h^{\leftarrow}(n) \mbox{ as } n\rightarrow \infty,
\end{eqnarray}
where
\begin{eqnarray}\label{eq: g for CLT}
h(t)=\frac{t^2}{\int_{|x|\le t} |x|^2 \mu(\rd x)}.
\end{eqnarray}
\end{thrm}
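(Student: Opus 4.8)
The plan is to reduce the $d$-dimensional statement to the classical one-dimensional domain-of-attraction theory (Feller, \cite{Feller:1971}) via the Cram\'er--Wold device, and then to translate the resulting conditions on all linear projections into the single matrix condition $A_\bullet\in MRV^\infty_0(kB)$. Throughout write $m_t=\int_{|x|\le t}x\,\mu(\rd x)$, so that $A_t=\int_{|x|\le t}xx^T\mu(\rd x)-m_tm_t^T$ and $\tr A_t=\int_{|x|\le t}|x|^2\mu(\rd x)-|m_t|^2$. By Cram\'er--Wold, \eqref{eq: long time for gaus CLT} holds if and only if for every $\theta\in\mathbb R^d$ the real random variables $\langle\theta,X_i\rangle$ satisfy $a_n\sum_{i=1}^n\langle\theta,X_i\rangle-\langle\theta,\xi_n\rangle\cond N(0,\theta^TB\theta)$; for fixed $\theta$ the $\langle\theta,X_i\rangle$ are i.i.d.\ on $\mathbb R$, so the one-dimensional theory applies to each projection, with the caveat that $\theta^TB\theta$ may be zero (a degenerate, but non-point, limit).

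The key technical device will be a lemma interchanging truncation of the vector, $|x|\le t$, with truncation of the projection, $|\langle\theta,x\rangle|\le t$. Since $\langle\theta,x\rangle^2\le|\theta|^2|x|^2$, the two truncation regions admit containments; combined with slow variation and the Potter bounds (Theorem 1.5.6 of \cite{Bingham:Goldie:Teugels:1987}), this shows that the quadratic form $\theta^TA_t\theta$ is slowly varying at $\infty$ if and only if the genuine truncated variance of $\langle\theta,X\rangle$ is, in which case the two are asymptotically proportional with the natural constant. One also needs the negligibility of the truncated means, $|m_t|^2=o(\tr A_t)$: in the finite-variance case $\tr A_t$ tends to a positive constant and $m_t$ converges, while in the infinite-variance case this is the standard central-limit negligibility estimate. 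This negligibility identifies $\tr A_t$ with $\int_{|x|\le t}|x|^2\mu(\rd x)$ up to asymptotic equivalence, hence links $h$ in \eqref{eq: g for CLT} to $t^2/\tr A_t$, which will be the source of the formulas \eqref{eq: a CLT finite var} and \eqref{eq: a CLT}.

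For necessity, assume \eqref{eq: long time for gaus CLT}. For $\theta$ with $\theta^TB\theta>0$ the lemma and Feller's theorem give that $\theta^TA_t\theta$ is slowly varying and $n a_n^2\,\theta^TA_t\theta|_{t=1/a_n}\to(\mathrm{const})\,\theta^TB\theta$, so the common factor $na_n^2$ cancels in ratios and $\theta^TA_t\theta/\phi^TA_t\phi\to\theta^TB\theta/\phi^TB\phi$ whenever $\phi^TB\phi>0$; for $\theta$ with $\theta^TB\theta=0$ the projection converges in probability to $0$, which by the degenerate-limit analysis forces $\theta^TA_t\theta=o(\tr A_t)$. Since $B\ne0$ is nonnegative-definite, $\tr B>0$, so at least one coordinate direction is nondegenerate; polarizing over $\theta$ ranging in $\{e_j\}\cup\{e_j+e_k\}$ converts the above into $\tr A_t\in RV^\infty_0$ together with $A_t/\tr A_t\to B/\tr B=kB$, i.e.\ $A_\bullet\in MRV^\infty_0(kB)$. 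For sufficiency, assume $A_\bullet\in MRV^\infty_0(kB)$; by the negligibility estimate $h(t)\sim t^2/\tr A_t\in RV^\infty_2$, so $h^\leftarrow\in RV^\infty_{1/2}$. Set $a_n=k^{-1/2}/h^\leftarrow(n)$ and $\xi_n=n a_n m_{1/a_n}$. For each $\theta$ the lemma shows $a_n\sum_{i\le n}\langle\theta,X_i\rangle-\langle\theta,\xi_n\rangle\cond N(0,\theta^TB\theta)$ --- using one-dimensional sufficiency when $\theta^TB\theta>0$ and convergence in probability to $0$ when $\theta^TB\theta=0$ --- so Cram\'er--Wold yields \eqref{eq: long time for gaus CLT}. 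Finally $a_\bullet\in RV^\infty_{-1/2}$ follows from $h^\leftarrow\in RV^\infty_{1/2}$; when $\int_{\mathbb R^d}|x|^2\mu(\rd x)<\infty$, $\tr A_t\to\int_{\mathbb R^d}|x|^2\mu(\rd x)-\int_{\mathbb R^d}x^T\mu(\rd x)\int_{\mathbb R^d}x\,\mu(\rd x)$, giving \eqref{eq: a CLT finite var}, and \eqref{eq: a CLT} is just the definition of $a_n$.

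I expect the main obstacle to be the truncation-interchange lemma, and inside it the careful treatment of the degenerate directions: one must show that when a projection has limit $0$ its contribution to $A_t$ is genuinely $o(\tr A_t)$, not merely bounded. This is precisely what creates room for a degenerate limiting Gaussian and it has no one-dimensional statement to quote directly, so it will require a self-contained negligibility argument combining the weak convergence of the projection with the slow variation of $\tr A_t$.
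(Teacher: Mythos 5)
Your route is genuinely different from the paper's. The paper does not use Cram\'er--Wold at all: it quotes a multivariate convergence criterion (Corollary 3.2.15 of \cite{Meerschaert:Scheffler:2001}) which says that \eqref{eq: long time for gaus CLT} is equivalent to $n\mu(x:|x|>\epsilon/a_n)\to 0$ together with $na_n^2 A_{1/a_n}\to B$ at the \emph{matrix} level, and then spends all its effort converting that pair of conditions into $A_\bullet\in MRV^\infty_0(kB)$ via a Karamata computation (its Lemma 4, which also yields $\int|x|\,\mu(\rd x)<\infty$ and the tail negligibility) and the centering-removal identity of its Lemma 5, splitting into the finite- and infinite-variance cases. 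Your plan instead outsources only the one-dimensional theory and rebuilds the multivariate criterion by hand through projections. What you buy is self-containedness relative to classical references; what you pay is exactly the content of the Meerschaert--Scheffler criterion: the truncation interchange, the common norming sequence across directions, and the degenerate directions.

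One step of your plan is under-specified in a way that matters and that Feller's domain-of-attraction theorem alone cannot supply. In the necessity direction you need the vector-level tail estimate $n\mu(x:|x|>\epsilon/a_n)\to 0$ (and, downstream, $t^2\mu(|x|>t)=o\bigl(\int_{|x|\le t}|x|^2\mu(\rd x)\bigr)$ to justify your truncation interchange). A single nondegenerate projection gives you only $n\mu(|\langle\theta,x\rangle|>\epsilon/a_n)\to 0$, and $|x|$ large does not force $|\langle\theta,x\rangle|$ large, so you must run the argument over a spanning set of directions, e.g.\ via $\mu(|x|>t)\le\sum_j\mu(|x_j|>t/\sqrt d)$ --- and some coordinate directions $e_j$ may be degenerate ($e_j^TBe_j=0$). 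For those you cannot cite the domain-of-attraction theorem; you must invoke the degenerate convergence criterion for row-iid triangular arrays (convergence in probability of $a_nS_n^{(j)}-c_n$ to a constant forces $n\mu(|x_j|>\epsilon/a_n)\to 0$ and $na_n^2\mathrm{Var}(x_j1_{|x_j|\le \epsilon/a_n})\to 0$), and only then does your claim $\theta^TA_t\theta=o(\tr A_t)$ for degenerate $\theta$ follow. You flag the degenerate directions as the main obstacle, which is the right instinct; just be aware that the needed input is the triangular-array degenerate criterion, not an elaboration of the nondegenerate one-dimensional theory, and that the tail negligibility must be assembled from \emph{all} coordinate directions before the truncation-interchange lemma is available. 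With that supplied, the remaining steps (Potter bounds, $|m_t|^2=o(\tr A_t)$, $h^\leftarrow\in RV^\infty_{1/2}$, convergence of types for the uniqueness of $a_n$) go through as you describe.
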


\begin{cor}\label{cor: nec and suf CLT}
Let $\mu$ be a probability measure on $\mathbb R^d$ and let
$$
A_t'=\int_{|x|\le t} xx^T \mu(\rd x).
$$
Then $\mu$ belongs to the domain of attraction of some multivariate normal distribution if and only if there exists a nonnegative definite matrix $B'\ne0$ such that $A'_\bullet\in MRV_0^\infty(B')$.
\end{cor}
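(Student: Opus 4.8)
The plan is to deduce the corollary from Theorem \ref{thrm: doa normal for CLT}. Since $\tr A_\bullet$ and $\tr A'_\bullet$ take values in $(0,\infty)$, the limiting matrix of any $MRV^\infty_0$ family has trace one, and one checks (using $\mu(\mathbb R^d)=1$) that $A_t$ and $A'_t$ are symmetric nonnegative-definite; hence requiring $A_\bullet\in MRV^\infty_0(kB)$ for some symmetric nonnegative-definite $B\ne 0$ with $k=1/\tr B$, as in Theorem \ref{thrm: doa normal for CLT}, is equivalent to requiring $A_\bullet\in MRV^\infty_0(C)$ for some nonnegative-definite $C\ne 0$, and likewise for $A'_\bullet$. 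Thus it suffices to show that $A_\bullet\in MRV^\infty_0(C)$ for some such $C$ if and only if $A'_\bullet\in MRV^\infty_0(C')$ for some such $C'$ (the two matrices need not coincide); by Theorem \ref{thrm: doa normal for CLT} this is the corollary. The link between the two families is the identity $A_t=A'_t-m_tm_t^T$ with $m_t=\int_{|x|\le t}x\,\mu(\rd x)$, so that $\tr A'_t=\tr A_t+|m_t|^2$, $0\le\tr A_t\le\tr A'_t$, and (Cauchy--Schwarz together with $\mu(\mathbb R^d)=1$) $|m_t|^2\le\tr A'_t$.

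The technical core is: if $g(t):=\tr A'_t=\int_{|x|\le t}|x|^2\mu(\rd x)$ is slowly varying at $\infty$ with $g(t)\to\infty$, then $|m_t|^2=o(g(t))$ as $t\to\infty$. First one gets the L\'evy-type tail bound $t^2\mu(|x|>t)=o(g(t))$: for $\lambda>1$, $g(\lambda t)-g(t)\ge t^2\mu(t<|x|\le\lambda t)=o(g(t))$, and summing the dyadic blocks $\mu(2^kt<|x|\le 2^{k+1}t)$ with the ratios $g(2^kt)/g(t)$ controlled by the Potter bounds gives the bound. A second dyadic summation, using $|m_{2^{k+1}t}-m_{2^kt}|\le 2^{k+1}t\,\mu(2^kt<|x|\le 2^{k+1}t)$ and the Potter bounds again, shows $m_t$ converges to a finite limit, so $|m_t|$ is bounded and $|m_t|^2=o(g(t))$. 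It follows that when $g$ is slowly varying with $g(t)\to\infty$, $\tr A_t\sim g(t)$ is slowly varying, $|m_t|^2/\tr A_t\to 0$, and $A_t/\tr A_t$, $A'_t/\tr A'_t$ share the same nonzero limit. The complementary finite-second-moment case is handled directly: then $A'_t\to\int xx^T\mu(\rd x)$ and $A_t\to\mathrm{Cov}_\mu$, and if $\mu$ is not a point mass both limits, hence both traces' limits, are nonzero, so both families lie in $MRV^\infty_0$ (in general of different matrices, which is immaterial here). The case $\mu=\delta_a$ with $a\ne 0$, in which $A'_\bullet$ but not $A_\bullet$ is matrix regularly varying, is the one degenerate exception, and since the limit there is a point mass it falls outside the scope of the statement.

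It remains to assemble the two implications. If $A'_\bullet\in MRV^\infty_0(B')$ with $B'\ne 0$, then $g=\tr A'_\bullet$ is slowly varying; if $g(t)\to\infty$ the estimate gives $A_\bullet\in MRV^\infty_0(B')$, and otherwise the direct argument (with $\mu$ not a point mass) gives $A_\bullet\in MRV^\infty_0(\mathrm{Cov}_\mu/\tr\mathrm{Cov}_\mu)$; either way Theorem \ref{thrm: doa normal for CLT} gives that $\mu$ is in the domain of attraction of a normal. Conversely, if $\mu$ is in the domain of attraction of some $N(0,B)$ with $B\ne 0$, Theorem \ref{thrm: doa normal for CLT} gives $A_\bullet\in MRV^\infty_0(kB)$ and $a_\bullet\in RV^\infty_{-1/2}$; here slow variation of $\tr A'_\bullet$ is not yet in hand, so one observes that the classical criterion for convergence of triangular-array row sums to a Gaussian forces $n\mu(|x|>\varepsilon/a_n)\to 0$ for every $\varepsilon>0$, and combining this with $a_\bullet\in RV^\infty_{-1/2}$ and a Karamata estimate yields $\int_{\mathbb R^d}|x|\,\mu(\rd x)<\infty$; then $m_t$ is bounded, and the case analysis above shows $A'_\bullet\in MRV^\infty_0$ of a nonzero matrix. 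The main obstacle is the negligibility estimate — the two dyadic summations converting slow variation of $g$ into the tail bound and then into convergence of $m_t$ — and, in the forward implication, extracting finiteness of the first absolute moment from the triangular-array criterion and the regular variation of the norming constants.
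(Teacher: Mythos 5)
Your proof is correct and follows essentially the same route as the paper: both reduce the corollary to the equivalence of the two $MRV^\infty_0$ conditions via the identity $A_t=A_t'-m_tm_t^T$ and show $|m_t|^2$ is negligible relative to the trace, the paper delegating the negligibility to its Lemma \ref{lemma: tech} (stated as ``easily verified'') and the finiteness of $\int_{\mathbb R^d}|x|\,\mu(\rd x)$ to Lemma \ref{lemma: levy measure conv to zero} and Feller's univariate results, where you prove both by hand with dyadic blocks and Potter bounds. Your explicit flagging of the point-mass case $\mu=\delta_a$, $a\ne0$ --- where $A'_\bullet\in MRV^\infty_0(aa^T/|a|^2)$ yet $A_\bullet$ is not matrix regularly varying and $\mu$ is not attracted to any normal other than a point mass --- identifies an edge case that the paper's ``the result is immediate'' silently skips and that strictly speaking requires excluding point masses from the statement.
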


We now give necessary and sufficient conditions for the long and short time behavior of a L\'evy process to be Gaussian. In one dimension this was characterized in \cite{Doney:Maller:2002}. Since long and short time behavior of Brownian motion is straight-foreward, without loss of generality, we focus on the case where the Gaussian part is zero. 

\begin{thrm}\label{thrm: doa of normal for Levy}
Fix $c\in\{0,\infty\}$. Let $B\ne0$ be a symmetric nonnegative-definite matrix, let $\{X_t:t\ge0\}$ be a L\'evy process with $X_1\sim ID(0,M,b)$, and let
\begin{eqnarray}\label{eq: trunc second moment func Levy}
A_t = \int_{|x|\le t} xx^T M(\rd x).
\end{eqnarray}
There exist non-stochastic functions $a_t$ and $\xi_t$ such that
\begin{eqnarray}\label{eq: long time for gaus Levy}
a_tX_t-\xi_t\cond N(0,B) \ \mbox{as}\ t\rightarrow c
\end{eqnarray}
if and only if $A_\bullet\in MRV^c_0(kB)$ where $k=1/\tr B$. Moreover, when this holds, $a_\bullet\in RV^c_{-1/2}$ and
\begin{eqnarray}\label{eq: a for Levy}
a_t\sim  k^{-1/2}/h^{\leftarrow}(t) \ \mbox{as}\ t\rightarrow c,
\end{eqnarray}
where 
\begin{eqnarray}\label{eq: g for Levy}
h(t)=\frac{t^2}{\int_{|x|\le t} |x|^2 M(\rd x)}.
\end{eqnarray}
\end{thrm}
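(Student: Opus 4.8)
The plan is to go through the classical description of convergence of infinitely divisible laws to a Gaussian in terms of convergence of L\'evy triplets, and then to convert that description into statements about regular variation.

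First I would record the triplet form of \eqref{eq: long time for gaus Levy}. Since $X_1\sim ID(0,M,b)$, the law of $a_tX_t$ is infinitely divisible with zero Gaussian part and L\'evy measure $tM_{a_t}$, where $\int g(y)\,(tM_{a_t})(\rd y)=t\int g(a_tx)\,M(\rd x)$; in particular $\int_{|y|\le\epsilon}yy^T\,(tM_{a_t})(\rd y)=ta_t^2A_{\epsilon/a_t}$ and $(tM_{a_t})(\{|y|>\epsilon\})=tM(\{|x|>\epsilon/a_t\})$. Checking \eqref{eq: long time for gaus Levy} along arbitrary sequences $t_n\to c$ and applying the standard convergence theorem for infinitely divisible distributions (e.g.\ Theorem 8.7 in \cite{Sato:1999}), one gets that, for a suitable choice of $\xi_t$, \eqref{eq: long time for gaus Levy} holds if and only if
\begin{eqnarray*}
\lim_{t\to c} tM\!\left(\{|x|>\epsilon/a_t\}\right)=0 \quad\text{for all }\epsilon>0
\end{eqnarray*}
and
\begin{eqnarray*}
\lim_{\epsilon\downarrow0}\ \limsup_{t\to c}\ \left| ta_t^2A_{\epsilon/a_t}-B\right|=0
\end{eqnarray*}
(entrywise), the centering $\xi_t$ being produced for free once these two conditions hold because the drift term is unconstrained. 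Note also $\tr A_t=\int_{|x|\le t}|x|^2M(\rd x)=t^2/h(t)$.

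For sufficiency, assume $A_\bullet\in MRV^c_0(kB)$, i.e.\ $\tr A_\bullet\in RV^c_0$ and $A_t/\tr A_t\to kB$. Then $h(t)=t^2/\tr A_t\in RV^c_2$, so by the facts recalled in Section~2 it has an asymptotic inverse $h^\leftarrow\in RV^c_{1/2}$ with $h(h^\leftarrow(t))\sim t$; set $a_t=k^{-1/2}/h^\leftarrow(t)\in RV^c_{-1/2}$. Using $h(\lambda s)\sim\lambda^2h(s)$ one computes, for each fixed $\epsilon>0$,
\begin{eqnarray*}
\tr A_{\epsilon/a_t}=\frac{(\epsilon/a_t)^2}{h(\epsilon/a_t)}=\frac{\epsilon^2k\,h^\leftarrow(t)^2}{h(\epsilon k^{1/2}h^\leftarrow(t))}\sim\frac{h^\leftarrow(t)^2}{t},
\end{eqnarray*}
hence $ta_t^2\tr A_{\epsilon/a_t}\to k^{-1}=\tr B$; since $\epsilon/a_t\to c$ we also have $A_{\epsilon/a_t}/\tr A_{\epsilon/a_t}\to kB$, and multiplying these gives $ta_t^2A_{\epsilon/a_t}\to B$, so the iterated limit above is in fact a genuine limit. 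The tail condition follows from a Karamata-type estimate: slow variation of $\tr A_\bullet$ forces $s^2M(\{|x|>s\})/\tr A_s\to0$ as $s\to c$ (split the tail into finitely many dyadic annuli plus a remainder, bound each annulus by a difference of $\tr A$, and use the Potter bounds), so $tM(\{|x|>\epsilon/a_t\})=o(1)\cdot ta_t^2\tr A_{\epsilon/a_t}/\epsilon^2\to0$. This yields \eqref{eq: long time for gaus Levy}, and \eqref{eq: a for Levy} holds by construction. (Alternatively, the case $c=\infty$ could be deduced from Theorem~\ref{thrm: doa normal for CLT} applied to the increments, but the triplet approach treats $c=0$ and $c=\infty$ uniformly.)

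For necessity, suppose \eqref{eq: long time for gaus Levy} holds. The first step is to show $a_\bullet\in RV^c_{-1/2}$ by a convergence-of-types argument: for $\lambda>0$ the law of $X_{\lambda t}$ is the $\lambda$-th convolution power of that of $X_t$, so raising $a_tX_t-\xi_t\cond N(0,B)$ to that power gives $a_tX_{\lambda t}-\lambda\xi_t\cond N(0,\lambda B)$, and comparing this with $a_{\lambda t}X_{\lambda t}-\xi_{\lambda t}\cond N(0,B)$ via the (scalar) convergence of types theorem — applicable since $B\ne0$, so neither limit is a point mass — gives $a_{\lambda t}/a_t\to\lambda^{-1/2}$, i.e.\ $a_\bullet\in RV^c_{-1/2}$. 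Taking traces in the second triplet condition and using $\tr A_s=s^2/h(s)$ gives $t\epsilon^2/h(\epsilon/a_t)\to\tr B$ in the iterated sense; since $\epsilon/a_\bullet\in RV^c_{1/2}$ its asymptotic inverse lies in $RV^c_2$, which forces $h\in RV^c_2$, equivalently $\tr A_\bullet\in RV^c_0$. Dividing the triplet condition by its trace, and using that $\epsilon/a_t$ runs cofinally through a neighborhood of $c$ together with $|A_{s'}^{jk}-A_s^{jk}|\le|\tr A_{s'}-\tr A_s|=o(\tr A_s)$ for $s'\sim s$, one upgrades $A_{\epsilon/a_t}/\tr A_{\epsilon/a_t}\to kB$ to $A_s/\tr A_s\to B/\tr B=kB$ as $s\to c$, so $A_\bullet\in MRV^c_0(kB)$; finally applying $h^\leftarrow\in RV^c_{1/2}$ to $h(\epsilon/a_t)\sim t\epsilon^2k$ recovers $a_t\sim k^{-1/2}/h^\leftarrow(t)$.

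I expect the necessity direction to be the main obstacle: pinning down $a_\bullet\in RV^c_{-1/2}$ while $B$ may be degenerate (so the convergence of types theorem must be invoked with care), and then turning the iterated limits $\lim_\epsilon\limsup_t$ coming from the triplet criterion into honest regular-variation statements for $\tr A_\bullet$ and into the ordinary limit of $A_s/\tr A_s$. The Karamata-type tail estimate in the sufficiency part is routine but should be phrased so as to cover both $c=\infty$ and $c=0$.
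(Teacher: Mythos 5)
Your proof is correct and shares the paper's overall architecture: reduce \eqref{eq: long time for gaus Levy} to convergence of L\'evy triplets, then translate the triplet conditions into regular variation of $\tr A_\bullet$ together with convergence of $A_t/\tr A_t$. The sufficiency half is essentially identical to the paper's: your Karamata-type tail estimate is the paper's Lemma~\ref{lemma: levy measure conv to zero} (proved there by Fubini plus Karamata's theorem rather than dyadic annuli), and the computation of $ta_t^2A_{\epsilon/a_t}\to B$ via the asymptotic inverse of $h$ is the same. The genuine divergence is in necessity: you obtain $a_\bullet\in RV^c_{-1/2}$ up front by a convergence-of-types argument comparing $a_tX_{\lambda t}-\lambda\xi_t\cond N(0,\lambda B)$ (the $\lambda$-th convolution power, legitimate for infinitely divisible laws via the distinguished logarithm) with $a_{\lambda t}X_{\lambda t}-\xi_{\lambda t}\cond N(0,B)$, projecting onto a direction where $B$ is nondegenerate. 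The paper instead proves only the weaker facts $a_t\to1/c$ and $a_{t+1}/a_t\to1$ (Lemma~\ref{lemma: at conv to 0 or infty}; its $c=0$ case is your convolution-power trick specialized to $\lambda=t/(t+1)$), feeds $\lim_{t} ta_t^2U(s/a_t)=\tr B$ into the sequential criterion for regular variation of \emph{monotone} functions to conclude $U=\tr A_\bullet\in RV^c_0$, and only afterwards reads off the index of $a_\bullet$. Your route is more direct and yields the index of $a_\bullet$ before anything is known about $U$; the paper's route leans on monotonicity of $U$ and avoids the types theorem. The one step you should flesh out (the paper is also terse here) is the passage from information about $A_s$ along the net $s=\epsilon/a_t$ to the genuine limit $A_s/\tr A_s\to kB$ as $s\to c$: this needs either monotonicity of $U$ sandwiched between consecutive values of $\epsilon/a_t$, or your bound $|A^{jk}_{s'}-A^{jk}_s|\le\tr A_{s'}-\tr A_s$ combined with a proof that every $s$ near $c$ is multiplicatively close to some $1/a_t$ (which does follow from $a_\bullet\in RV^c_{-1/2}$ via the uniform convergence theorem); as written this is asserted rather than proved, but it is fixable exactly along the lines you indicate.
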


Combining Corollary \ref{cor: nec and suf CLT} with Theorem 2 gives the following.

\begin{cor}
Let
$\mu=ID(0,M,b)$. There is a nonnegative definite matrix $B\ne0$ with
$$
\int_{|x|\le \bullet} xx^T \mu(\rd x) \in MRV^\infty_{0}(B)
$$
if and only if there is a nonnegative definite matrix $B'\ne0$ with
$$
\int_{|x|\le \bullet} xx^T M(\rd x) \in MRV^\infty_{0}(B').
$$
\end{cor}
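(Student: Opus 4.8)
The plan is to obtain this by composing Corollary~\ref{cor: nec and suf CLT} with Theorem~\ref{thrm: doa of normal for Levy} (taken with $c=\infty$), the link being the equivalence
\emph{$\mu$ lies in the domain of attraction of some non-point-mass multivariate normal $\iff$ the L\'evy process $\{X_t\}$ with $X_1\sim\mu$ admits non-stochastic $a_t>0,\xi_t\in\mathbb R^d$ with $a_tX_t-\xi_t\cond N(0,B^*)$ as $t\to\infty$ for some nonnegative-definite $B^*\ne0$.}
Granting this, the first displayed condition is, by Corollary~\ref{cor: nec and suf CLT}, equivalent to the left side of the equivalence; and, since the limiting matrix of any $MRV$ class has trace one (so that $kB=B/\tr B$, with $k=1/\tr B$, runs over precisely the matrices that can occur as such limits as $B$ runs over nonzero nonnegative-definite matrices, with $k=1$ when $\tr B=1$), the second displayed condition is, by Theorem~\ref{thrm: doa of normal for Levy}, equivalent to the right side. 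Chaining these three equivalences proves the corollary.

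To establish the bridging equivalence I would argue as follows. The direction ``$\Leftarrow$'' is immediate: restrict $a_tX_t-\xi_t\cond N(0,B^*)$ to integer $t=n$ and use $X_n\eqd\sum_{i=1}^nY_i$ with $Y_i\iid\mu$ to get $a_n\sum_{i=1}^nY_i-\xi_n\cond N(0,B^*)$. For ``$\Rightarrow$'', assume $a_n\sum_{i=1}^nY_i-\xi_n\cond N(0,B^*)$, so that also $a_nX_n-\xi_n\cond N(0,B^*)$. Extend the norming to a real argument by putting $a_t:=k^{-1/2}/h^\leftarrow(t)$ with $h$ as in \eqref{eq: g for CLT} in the infinite-variance case, and the analogue of \eqref{eq: a CLT finite var} otherwise; by the ``moreover'' part of Theorem~\ref{thrm: doa normal for CLT} this agrees asymptotically with the integer norming and $a_\bullet\in RV^\infty_{-1/2}$, so $a_t/a_{\lfloor t\rfloor}\to1$. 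Since $X_t-X_{\lfloor t\rfloor}\eqd X_{t-\lfloor t\rfloor}$ with $t-\lfloor t\rfloor\in[0,1)$, and $\{X_s:s\in[0,1]\}$ is tight (c\`adl\`ag paths) while $a_t\to0$, we get $a_t(X_t-X_{\lfloor t\rfloor})\conp0$. Then, with $\tilde\xi_t:=(a_t/a_{\lfloor t\rfloor})\xi_{\lfloor t\rfloor}$, the identity $a_tX_{\lfloor t\rfloor}-\tilde\xi_t=(a_t/a_{\lfloor t\rfloor})(a_{\lfloor t\rfloor}X_{\lfloor t\rfloor}-\xi_{\lfloor t\rfloor})$, the convergence $a_{\lfloor t\rfloor}X_{\lfloor t\rfloor}-\xi_{\lfloor t\rfloor}\cond N(0,B^*)$ as $t\to\infty$ (reading off the integer-indexed limit along $\lfloor t\rfloor$), and Slutsky's theorem give $a_tX_t-\tilde\xi_t\cond N(0,B^*)$.

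The main obstacle is this interpolation step: verifying that the overshoot increments $X_t-X_{\lfloor t\rfloor}$ are asymptotically negligible once normed and that switching from $(a_{\lfloor t\rfloor},\xi_{\lfloor t\rfloor})$ to $(a_t,\tilde\xi_t)$ leaves the limit intact; all the rest is bookkeeping on top of results already in hand. This interpolation is the standard correspondence between the iid-sum and L\'evy-process domains of attraction and could equally be cited rather than reproved. The only case not covered is $M=0$, i.e.\ $\mu$ a point mass, which is degenerate and excluded.
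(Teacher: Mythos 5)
Your proposal is correct and follows exactly the route the paper intends: the paper's entire proof is the one line ``Combining Corollary \ref{cor: nec and suf CLT} with Theorem \ref{thrm: doa of normal for Levy} gives the following,'' and your argument is precisely that combination, with the bridging equivalence the paper leaves implicit (iid domain of attraction $\iff$ long-time convergence of the associated L\'evy process, via $a_t/a_{\lfloor t\rfloor}\to1$ and $a_t(X_t-X_{\lfloor t\rfloor})\conp 0$) spelled out correctly, along with the correct observation that the $MRV$ limiting matrices are exactly the trace-one ones. The only caveat is the degenerate case $M=0$ with $\mu$ a point mass at some $b'\ne0$, where the uncentered matrix $\int_{|x|\le \bullet}xx^T\mu(\rd x)$ is still $MRV^\infty_0$ while the $M$-side condition fails; you flag this correctly, and it is an edge case the paper itself glosses over.
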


\section{Proofs}\label{sec: Proofs}

\begin{lemma}\label{lemma: eqiv between deltas}
Fix $a,b\in(0,\infty)$ and let $\{M_n\}$ be a sequence of measures on $\mathbb R^d$ satisfying \eqref{eq: integ cond}.  If, for any $s>0$,
$
\lim_{n\to\infty}M_n(x:|x|>s)=0 
$  
then
\begin{eqnarray}\label{eq: eqiv Mn}
\lim_{n\rightarrow\infty}\left( \int_{|x|\le a}xx^T M_n(\rd x) - \int_{|x|\le b}xx^T M_n(\rd x) \right)=0.
\end{eqnarray}
\end{lemma}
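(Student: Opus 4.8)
The plan is to reduce the matrix-valued statement to a scalar one and then exploit the hypothesis that the mass of $M_n$ escaping any fixed neighborhood of the origin vanishes. Without loss of generality assume $a \le b$, so that the difference in \eqref{eq: eqiv Mn} equals $\int_{a < |x| \le b} xx^T M_n(\rd x)$, a matrix whose entries are all dominated in absolute value by $\int_{a<|x|\le b} |x|^2 M_n(\rd x)$ (using $|x_i x_j| \le |x|^2$). Hence it suffices to show that this last scalar integral tends to $0$ as $n \to \infty$; convergence of a matrix to $0$ is equivalent to entrywise convergence.

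Next I would split the region $a < |x| \le b$ according to whether $|x|$ is large or small. Fix $\varepsilon > 0$. On the annulus $a < |x| \le b$ with $|x| \le 1$ — more precisely, choose a small $s \in (0,a]$ if $a \le 1$, else this piece is empty — we would bound $\int |x|^2 M_n(\rd x)$ above by $b^2 M_n(x : s < |x| \le b) \to 0$ for fixed $s$; but that is not quite right since we still need to control the part near $a$. The cleaner route: for any fixed $s$ with $0 < s < a$, write $\int_{a<|x|\le b}|x|^2 M_n(\rd x) \le b^2 M_n(x : |x| > s)$, and since $a > 0$ we may take such an $s$. Then by hypothesis $M_n(x:|x|>s) \to 0$, so the whole quantity tends to $0$. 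This is in fact enough on its own, because the region $a < |x| \le b$ is bounded away from the origin (its closure does not contain $0$), so no delicate truncation near $0$ is needed at all.

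So the argument is genuinely short: since $a>0$, pick any $s \in (0,a)$; then $\{x : a < |x| \le b\} \subseteq \{x : |x| > s\}$, and on this set $|x|^2 \le b^2$, giving
\begin{eqnarray*}
\left| \int_{a<|x|\le b} x_i x_j\, M_n(\rd x) \right| \le \int_{a<|x|\le b} |x|^2\, M_n(\rd x) \le b^2\, M_n(x : |x| > s) \to 0
\end{eqnarray*}
as $n \to \infty$, for every pair $(i,j)$. Therefore each entry of the matrix in \eqref{eq: eqiv Mn} converges to $0$, which gives the claim. The only place one must be slightly careful is the case $a = b$, where the difference is identically $0$ and there is nothing to prove, and the bookkeeping of assuming $a \le b$ versus $a \ge b$ (the roles of $a$ and $b$ are symmetric, so this is harmless). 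I do not anticipate a real obstacle here; the lemma is essentially the observation that the $xx^T$-second-moment contribution from any fixed annulus bounded away from $0$ is controlled by the mass in that annulus, which is assumed to vanish.
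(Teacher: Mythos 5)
Your proof is correct and is essentially identical to the paper's: both reduce to entrywise bounds via $|x_ix_j|\le|x|^2\le b^2$ on the annulus $a<|x|\le b$ and then invoke the hypothesis that $M_n$ puts vanishing mass outside any ball around the origin. The only cosmetic difference is that the paper applies the hypothesis directly with $s=a$ rather than choosing $s\in(0,a)$.
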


\begin{proof}
Without loss of generality assume that $a<b$. For all $1\le i,j\le d$ 
\begin{eqnarray*}
&&\left|\int_{|x|\le b} x_ix_j M_n(\rd x)-\int_{|x|\le a} x_ix_j M_n(\rd x)\right| = \left|\int_{a<|x|\le b} x_ix_j M_n(\rd x)\right|\\
&& \le \int_{a<|x|\le b} |x|^2 M_n(\rd x) \le b^2 M_n\left(x:a<|x|\le b\right) \rightarrow 0,
\end{eqnarray*}
as required.
\end{proof}

The following is a specialization of Theorem 3.1.14 in \cite{Meerschaert:Scheffler:2001} (or Theorem 8.7 in \cite{Sato:1999}). To put the result in this form we use Lemma \ref{lemma: eqiv between deltas}.

\begin{lemma}\label{lemma: conv ID}
If $\mu_n=ID(0,M_n,b_n)$ then $\mu_n\conw N(b,A)$ if and only if $b_n\to b$,
\begin{eqnarray}\label{eq: Mn to 0}
M_n(x:|x|>s)\to 0 \mbox{ for every } s>0,
\end{eqnarray}
and
\begin{eqnarray}\label{eq: gaus comp inf div}
\lim_{n\rightarrow\infty} \int_{|x|\le 1} xx^T M_n(\rd x) = A.
\end{eqnarray}
\end{lemma}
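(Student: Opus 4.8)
The plan is to obtain Lemma~\ref{lemma: conv ID} by specializing the general convergence criterion for infinitely divisible laws---Theorem 3.1.14 in \cite{Meerschaert:Scheffler:2001} (equivalently Theorem 8.7 in \cite{Sato:1999})---to the case where every approximating triplet has vanishing Gaussian part and the limit is purely Gaussian, and then invoking Lemma~\ref{lemma: eqiv between deltas} to collapse the $\epsilon$-truncation condition into the single clean statement \eqref{eq: gaus comp inf div}. Recall that the general criterion says: if $\mu_n=ID(A_n,M_n,b_n)$ and $\mu=ID(A,M,b)$, then $\mu_n\conw\mu$ if and only if (i) $\int f\,\rd M_n\to\int f\,\rd M$ for every bounded continuous $f$ vanishing on a neighbourhood of the origin; (ii) $\lim_{\epsilon\downarrow0}\limsup_{n\to\infty}\bigl|\langle z,A_nz\rangle+\int_{|x|\le\epsilon}\langle z,x\rangle^2M_n(\rd x)-\langle z,Az\rangle\bigr|=0$ for all $z\in\mathbb R^d$; and (iii) the centering vectors converge, once all triplets are written with a common truncation function. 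In our setting $A_n\equiv0$ and $\mu=N(b,A)=ID(A,0,b)$, so the limiting L\'evy measure $M$ is the zero measure.

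I would then verify that each of (i)--(iii) matches the corresponding condition in the statement. For (i), since $M=0$ one direction is immediate from $|\int f\,\rd M_n|\le\|f\|_\infty M_n(x:|x|>s_0)$ whenever $f$ vanishes on $\{|x|<s_0\}$, and for the converse one tests (i) against a fixed continuous $f$ with $0\le f\le1$, $f\equiv0$ on $\{|x|\le s/2\}$ and $f\equiv1$ on $\{|x|\ge s\}$ to recover \eqref{eq: Mn to 0}. For (ii), set $A_n^{(\epsilon)}=\int_{|x|\le\epsilon}xx^TM_n(\rd x)$, so that the bracketed quantity is $\langle z,(A_n^{(\epsilon)}-A)z\rangle$ with $A_n^{(\epsilon)}-A$ symmetric. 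Granting \eqref{eq: Mn to 0}, Lemma~\ref{lemma: eqiv between deltas} gives $A_n^{(\epsilon)}-A_n^{(1)}\to0$ as $n\to\infty$ for each fixed $\epsilon\in(0,\infty)$; hence $\limsup_n|\langle z,(A_n^{(\epsilon)}-A)z\rangle|$ is independent of $\epsilon$ and equals $\limsup_n|\langle z,(A_n^{(1)}-A)z\rangle|$, so (ii) is equivalent to $\langle z,A_n^{(1)}z\rangle\to\langle z,Az\rangle$ for all $z$, which by polarization is exactly \eqref{eq: gaus comp inf div}. Finally, for (iii): since the limiting L\'evy measure is $0$, the centering condition reduces to $b_n\to b$; if the cited theorem is invoked with a truncation function other than the $x/(1+|x|^2)$ used here (e.g.\ $\mathbf{1}_{\{|x|\le1\}}$), one also checks that the discrepancy $\int x\bigl(\mathbf{1}_{\{|x|\le1\}}-1/(1+|x|^2)\bigr)M_n(\rd x)$ vanishes: splitting at $|x|=s$, using $|x\mathbf{1}_{\{|x|\le1\}}-x/(1+|x|^2)|\le|x|^3$ on $\{|x|\le1\}$ together with $\int_{|x|\le1}|x|^2M_n(\rd x)=\tr A_n^{(1)}\to\tr A$ (from \eqref{eq: gaus comp inf div}) and \eqref{eq: Mn to 0} on $\{|x|>s\}$, the $\limsup$ is bounded by a constant times $s$, which is arbitrary.

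The only genuinely delicate point is condition (ii): one must be confident that the double limit $\lim_{\epsilon\downarrow0}\limsup_{n\to\infty}$ truly reduces to $\limsup_{n\to\infty}\|A_n^{(1)}-A\|$, i.e.\ that the Gaussian part emerging in the limit is captured entirely by the truncated second moment matrix at the fixed radius $1$. This is precisely the role of Lemma~\ref{lemma: eqiv between deltas}; the remaining steps are routine bookkeeping about truncation functions and elementary estimates, so I do not anticipate further obstacles.
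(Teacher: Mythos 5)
Your proposal is correct and follows exactly the route the paper takes: the paper's entire ``proof'' is the remark that the lemma is a specialization of Theorem 3.1.14 in \cite{Meerschaert:Scheffler:2001} (or Theorem 8.7 in \cite{Sato:1999}) combined with Lemma \ref{lemma: eqiv between deltas} to replace the $\epsilon$-truncated second moment condition by the fixed-radius condition \eqref{eq: gaus comp inf div}. You simply supply the bookkeeping (the test-function argument for \eqref{eq: Mn to 0}, the polarization step, and the truncation-function reconciliation) that the paper leaves implicit, and all of it checks out.
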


\begin{lemma}\label{lemma: at conv to 0 or infty}
Let $\{X_t: t\ge0\}$ be a L\'evy process, let $Y\sim N(0,A)$ with $A\ne0$, and let $a_t$ be a positive function. Assume that \eqref{eq: long time for gaus Levy} holds with some function $\xi_t$.\\
1. If $c=0$ then $\lim_{t\downarrow0}a_t=\infty$ and $a_{1/t}\sim a_{1/(t+1)}$ as $t\rightarrow\infty$.\\
2. If $c=\infty$ then $\lim_{t\rightarrow\infty}a_t=0$ and $a_t\sim a_{t+1}$ as $t\rightarrow\infty$.
\end{lemma}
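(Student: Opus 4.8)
The plan is to reduce the whole statement to one scalar relation between $a_t$ and the characteristic exponent of $X_1$. Write $\mu$ for the law of $X_1$, so that $\hat\mu(z)=e^{C_\mu(z)}$ with $C_\mu$ the cumulant generating function having Gaussian part zero (as $X_1\sim ID(0,M,b)$), and set
$$
\psi(z):=-\Re C_\mu(z)=\int_{\mathbb R^d}\bigl(1-\cos\langle z,x\rangle\bigr)\,M(\rd x)\ge0 .
$$
Then $\psi$ is continuous, $\psi(0)=0$, and splitting the integral at $|x|=1$ and using \eqref{eq: integ cond} shows $\psi$ is bounded on every bounded set. If $M\equiv0$ then $X_t=bt$ is non-random, so $a_tX_t-\xi_t$ is non-random and cannot converge to $N(0,A)$, which is not a point mass because $A\ne0$; hence $M\not\equiv0$ and $\psi\not\equiv0$. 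Taking moduli of characteristic functions in the assumed convergence $a_tX_t-\xi_t\cond N(0,A)$ and using $\rE e^{i\langle z,X_t\rangle}=e^{tC_\mu(z)}$ gives, for every $z\in\mathbb R^d$,
\begin{eqnarray}\label{eq: key psi relation}
t\,\psi(a_tz)\longrightarrow q(z):=\tfrac12\langle z,Az\rangle\qquad\text{as }t\to c .
\end{eqnarray}
Since $A$ is symmetric, nonnegative-definite and nonzero, fix $z_0$ with $q(z_0)>0$.

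To identify the limit of $a_t$ I argue by contradiction. If $c=\infty$ and $a_t\not\to0$, there are $t_n\to\infty$ with $a_{t_n}\to L\in(0,\infty]$; dividing \eqref{eq: key psi relation} by $t_n\to\infty$ shows $\psi(a_{t_n}z)\to0$ for every $z$. If $L<\infty$, continuity of $\psi$ forces $\psi(Lz)=0$ for all $z$, hence $\psi\equiv0$ since $L>0$ — impossible. If $L=\infty$, fix $\delta>0$; the measure $M(\cdot\cap\{|x|>\delta\})$ is finite, so integrating the nonnegative integrand $1-\cos\langle a_{t_n}z,x\rangle$ over $z$ in the unit ball, interchanging the order of integration, and invoking the Riemann--Lebesgue lemma (legitimate because $a_{t_n}x\to\infty$ for $M$-a.e.\ $x$) gives $M(\{|x|>\delta\})=0$; letting $\delta\downarrow0$ yields $M\equiv0$ — again impossible. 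So $a_t\to0$. The case $c=0$ is easier: if $a_t\not\to\infty$ as $t\downarrow0$, take $t_n\downarrow0$ with $(a_{t_n})$ bounded; then $\psi(a_{t_n}z_0)$ stays bounded while $t_n\downarrow0$, so $t_n\psi(a_{t_n}z_0)\to0\ne q(z_0)$, contradicting \eqref{eq: key psi relation}.

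For the near-constancy statements I symmetrize to remove the shifts. Let $X'$ be an independent copy of $X$ and put $U_t:=a_t(X_t-X'_t)$, so $U_t\cond N(0,2A)$ and $\hat U_t(z)=e^{-2t\psi(a_tz)}$. If $c=\infty$, write $X_{t+1}=X_t+\zeta_t$ and $X'_{t+1}=X'_t+\zeta'_t$ with $(\zeta_t,\zeta'_t)$ independent of $(X_t,X'_t)$ and $\zeta_t\eqd\zeta'_t\eqd X_1$; then $U_{t+1}=\tfrac{a_{t+1}}{a_t}U_t+a_{t+1}(\zeta_t-\zeta'_t)$ with independent summands, and since $a_{t+1}\to0$ while $\zeta_t-\zeta'_t$ has a fixed law, $a_{t+1}(\zeta_t-\zeta'_t)\conp0$, so by Slutsky $\tfrac{a_{t+1}}{a_t}U_t\cond N(0,2A)$; comparing with $U_t\cond N(0,2A)$ and applying the convergence of types theorem (scalar form: if $V_n\cond V$, $c_nV_n\cond V$, $c_n>0$, and $V$ is not a point mass, then $c_n\to1$ — a limit that is not a point mass cannot be scaled by an unbounded or a vanishing factor and stay tight) forces $a_{t+1}/a_t\to1$. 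If $c=0$, put $s=1/t$, $s'=1/(t+1)$, $r=s-s'=\tfrac{1}{t(t+1)}$; writing $X_s=X_{s'}+D_t$, $X'_s=X'_{s'}+D'_t$ with $(D_t,D'_t)$ independent of $(X_{s'},X'_{s'})$ and $D_t\eqd D'_t\eqd X_r$, we get $U_s=\tfrac{a_s}{a_{s'}}U_{s'}+W_t$ with $W_t:=a_s(D_t-D'_t)$ independent of $U_{s'}$ and $\hat W_t(z)=e^{-2r\psi(a_sz)}=\exp\{-\tfrac{2}{t(t+1)}\psi(a_{1/t}z)\}$. By \eqref{eq: key psi relation} applied along the parameter $1/t\downarrow0$ one has $\psi(a_{1/t}z)=t\,(q(z)+o(1))$, so the exponent equals $\tfrac{2}{t+1}(q(z)+o(1))\to0$; hence $W_t\conp0$, and Slutsky together with convergence of types give $a_{1/t}/a_{1/(t+1)}\to1$.

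The characteristic-function bookkeeping and the convergence of types lemma are routine, the latter being proved directly by the tightness argument sketched above. The one genuinely delicate point is excluding $a_{t_n}\to\infty$ in the first assertion when $c=\infty$: there continuity of $\psi$ gives no information, and the averaging-plus-Riemann--Lebesgue device is what carries the argument; stating it cleanly — in particular, recovering information about $M$ near the origin through the limit $\delta\downarrow0$ — is the step I would be most careful about.
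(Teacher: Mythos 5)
Your proof is correct in substance and reaches all four conclusions, but by a route that differs from the paper's in two of them. For the asymptotic ratios ($a_{t+1}/a_t\to1$ and $a_{1/t}/a_{1/(t+1)}\to1$) your argument is structurally the paper's own --- decompose via independent increments, kill the extra term by Slutsky, invoke convergence of types --- except that you symmetrize first, which eliminates the centering $\xi_t$ and lets you work only with the real part $\psi$ of the exponent; the paper instead carries the shifts through the identity comparing $\frac1t C_{X_1}(a_{1/t}z)$ with $\frac1{t+1}C_{X_1}(a_{1/t}z)$. For the limits of $a_t$ the approaches genuinely diverge: the paper proves $a_t\to\infty$ ($c=0$) from stochastic continuity of the process plus Slutsky, and $a_t\to0$ ($c=\infty$) from the necessary condition $tM(x:|x|>s/a_t)\to0$ supplied by Lemma \ref{lemma: conv ID}; you instead extract everything from the single scalar relation $t\psi(a_tz)\to\frac12\langle z,Az\rangle$ obtained by taking moduli of characteristic functions. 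This is more self-contained (it bypasses Lemma \ref{lemma: conv ID} entirely), at the cost of the Riemann--Lebesgue averaging argument in the subcase $a_{t_n}\to\infty$.

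One step in that subcase needs justification. You compare the lower bound $\int_{|z|\le1}\psi(a_{t_n}z)\,\rd z\ge\int_{|x|>\delta}g_n(x)M(\rd x)$ (with $g_n\to\mathrm{vol}(B_1)$ by Riemann--Lebesgue and Fatou) against the claim that the left-hand side tends to $0$, but pointwise convergence $\psi(a_{t_n}z)\to0$ does not by itself allow the passage under the integral sign. You need, e.g., the standard fact that convergence of characteristic functions to a continuous limit is locally uniform, which upgrades $t_n\psi(a_{t_n}z)\to q(z)$ to uniform convergence on the unit ball and hence gives $\sup_{|z|\le1}\psi(a_{t_n}z)=O(1/t_n)$. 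With that sentence added the argument closes. A cosmetic remark: your formula $\psi(z)=\int(1-\cos\langle z,x\rangle)M(\rd x)$ uses the hypothesis $X_1\sim ID(0,M,b)$ from Theorem \ref{thrm: doa of normal for Levy} rather than the bare statement of the lemma; nothing is lost, since a Gaussian part $\Sigma$ would only add $\frac12\langle z,\Sigma z\rangle$ to $\psi$ and the same arguments go through.
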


\begin{proof}
First assume $c=0$. Let $\ell:=\liminf_{t\downarrow0}a_t$ and assume for the sake of contradiction that $\ell<\infty$. This means that there is a sequence of positive real numbers $\{t_n\}$ converging to $0$ such that $\lim_{n\rightarrow\infty} a_{t_n} = \ell$. Consider a further subsequence $\{t_{n_i}\}$ such that $\lim_{t\to\infty} \xi_{t_{n_i}}$ exists (although we allow it to be infinite). Stochastic continuity of L\'evy processes implies that $X_t\conp 0$ as $t\downarrow0$, thus Slutzky's Theorem implies that
$$
Y=\dlim_{i\rightarrow\infty} (a_{t_{n_i}}X_{t_{n_i}}-\xi_{t_{n_i}}) \eqd \ell 0-\lim_{i\to\infty}\xi_{t_{n_i}},
$$
which contradicts the assumption that $Y\sim N(0,A)$. Thus $\lim_{t\downarrow0}a_t=\infty$.

Let $C_{X_1}(z)$, $z\in\mathbb R^d$, be the cumulant generating function of $X_1$. The characteristic function of $a_{1/t}X_{1/t}-\xi_{1/t}$ is $\exp\left(\frac{1}{t}C_{X_1}(a_{1/t}z)-i\langle z,\xi_{1/t}\rangle\right)$. If $\hat\mu_Y(z)$ is the characteristic function of $Y$ then
\begin{eqnarray*}
\hat\mu_Y(z) &=& \lim_{t\rightarrow\infty} \exp\left(\frac{1}{t}C_{X_1}(a_{1/t}z)-i\langle z,\xi_{1/t}\rangle\right)\\
&=& \lim_{t\rightarrow\infty} \exp\left(\frac{1}{t+1}C_{X_1}(a_{1/t}z)-i\langle z, \frac{t}{t+1}\xi_{1/t}\rangle\right).
\end{eqnarray*}
This implies that
\begin{eqnarray*}
Y &\eqd& \dlim_{t\to\infty} \left(a_{1/t}X_{1/t}-\xi_{1/t}\right) \\
&\eqd& \dlim_{t\to\infty} \left(a_{1/t}X_{1/(t+1)}-\frac{t}{t+1}\xi_{1/t}\right)\\
&\eqd& \dlim_{t\to\infty} \left(\frac{a_{1/t}}{a_{1/(t+1)}}\left(a_{1/(t+1)}X_{1/(t+1)}-\xi_{1/(t+1)}\right)\right.\\
&&\qquad\qquad \left.+\frac{a_{1/t}}{a_{1/(t+1)}}\xi_{1/(t+1)}-\frac{t}{t+1}\xi_{1/t}\right).
\end{eqnarray*}
Since $\left(a_{1/(t+1)}X_{1/(t+1)}-\xi_{1/(t+1)}\right)\cond Y$ as $t\to\infty$, the result follows.

Now assume $c=\infty$. The L\'evy measure of $a_tX_t-\xi_t$ is $M_t(\cdot) = tM(\cdot/a_t)$. By Lemma \ref{lemma: conv ID}  for any $s>0$
\begin{eqnarray*}
\lim_{t\rightarrow\infty}tM(|x|>s/a_t) = \lim_{t\rightarrow\infty} M_t(|x|>s) =  0,
\end{eqnarray*}
which implies that $\lim_{t\to\infty}a_t=0$. Now let $X'\eqd X_1$ be independent of $\{X_t:t\ge0\}$ and note that $a_tX'\conp0$ as $t\to\infty$. The facts that
\begin{eqnarray*}
Y &\eqd& \dlim_{t\rightarrow\infty} \left(a_{t+1}X_{t+1}-\xi_{t+1}\right)\\
&\eqd& \dlim_{t\rightarrow\infty} \left(a_{t+1}X_t + a_{t+1}X'-\xi_{t+1}\right)\\
&\eqd& \dlim_{t\rightarrow\infty} \left(\frac{a_{t+1}}{a_t}\left(a_t X_t-\xi_t\right) + \frac{a_{t+1}}{a_t}\xi_t-\xi_{t+1}\right)
\end{eqnarray*}
and $\left(a_t X_t-\xi_t\right)\cond Y$ as $t\to\infty$ give the result.
\end{proof}

\begin{lemma}\label{lemma: levy measure conv to zero}
Fix $c\in\{0,\infty\}$.  Let $M$ be a measure on $\mathbb R^d$ satisfying \eqref{eq: integ cond}, let $A_t$ be defined by \eqref{eq: trunc second moment func Levy}, and let $a_t$ be defined by \eqref{eq: a for Levy}. Assume that 
$A_\bullet\in MRV^c_0(B)$ for some $B\ne0$ and
\begin{eqnarray}\label{eq: Mt}
M_t(D) = t\int_{\mathbb R^d}1_A(a_t x)M(\rd x), \qquad D\in\mathfrak B(\mathbb R^d).
\end{eqnarray}
If, for $\eta\in[0,2)$,
\begin{eqnarray}\label{eq: eta moment}
\int_{|x|>1}|x|^\eta M(\rd x)<\infty
\end{eqnarray}
then $\lim_{t\rightarrow c}\int_{|x|>s}|x|^\eta M_t(\rd x) = 0$ for all $s>0$. Moreover, when $c=\infty$ \eqref{eq: eta moment} holds for every $\eta\in[0,2)$.
\end{lemma}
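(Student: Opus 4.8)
The plan is to reduce everything to a single Karamata-type tail estimate for $M$. Throughout write $g(r):=\tr A_r=\int_{|x|\le r}|x|^2M(\rd x)$ and $N_\eta(r):=\int_{|x|>r}|x|^\eta M(\rd x)$; since $A_\bullet\in MRV^c_0(B)$, the function $g$ is slowly varying at $c$, and $h(r)=r^2/g(r)$. Because $M_t$ is $t$ times the image of $M$ under $x\mapsto a_tx$, a change of variables gives
\begin{eqnarray*}
\int_{|x|>s}|x|^\eta M_t(\rd x)=t\,a_t^\eta\,N_\eta(s/a_t)\qquad(\eta\in[0,2),\ s>0),
\end{eqnarray*}
so with $r_t:=s/a_t$ the lemma becomes the assertion $t\,a_t^\eta\,N_\eta(r_t)\to0$ as $t\to c$.

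I would first dispose of the scaling factor. Since $g\in RV^c_0$ we have $h\in RV^c_2$, hence, by the asymptotic-inverse facts recalled in Section~2, $h^\leftarrow\in RV^c_{1/2}$ with $h(h^\leftarrow(t))\sim t$. From $a_t\sim k^{-1/2}/h^\leftarrow(t)$ it follows that $a_t\to\infty$ if $c=0$ and $a_t\to0$ if $c=\infty$, so in both cases $r_t\to c$, and $r_t\sim sk^{1/2}h^\leftarrow(t)$. Using $g(r_t)=r_t^2/h(r_t)$ together with the uniform convergence theorem for regularly varying functions along $r_t/h^\leftarrow(t)\to sk^{1/2}$ gives $h(r_t)\sim s^2kt$, whence
\begin{eqnarray*}
t\,a_t^2\,g(r_t)=\frac{s^2t}{h(r_t)}\longrightarrow\frac1k\qquad\text{as }t\to c.
\end{eqnarray*}

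The crux is the estimate $r^{2-\eta}N_\eta(r)/g(r)\to0$ as $r\to c$; this also yields the ``moreover'' part, since when $c=\infty$ it shows $N_\eta(1)<\infty$, i.e.\ \eqref{eq: eta moment}. For $c=\infty$, I use the identity $|x|^\eta=(2-\eta)|x|^2\int_{|x|}^\infty v^{\eta-3}\,\rd v$ (valid for $\eta<2$) and Tonelli to obtain, after Karamata's theorem guarantees convergence of the relevant integral,
\begin{eqnarray*}
N_\eta(r)=(2-\eta)\int_r^\infty v^{\eta-3}g(v)\,\rd v-r^{\eta-2}g(r);
\end{eqnarray*}
Karamata's theorem ($g$ slowly varying, $3-\eta>1$) gives $(2-\eta)\int_r^\infty v^{\eta-3}g(v)\,\rd v\sim r^{\eta-2}g(r)$, so the two leading terms cancel and $N_\eta(r)=o(r^{\eta-2}g(r))$. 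For $c=0$ one splits $N_\eta(r)=\int_{r<|x|\le1}|x|^\eta M(\rd x)+N_\eta(1)$: the annulus term is handled exactly as above but with the version of Karamata's theorem at $0$ (substitute $v\mapsto1/v$), while $N_\eta(1)<\infty$ by \eqref{eq: eta moment}, and the Potter bounds give $r^{2-\eta}/g(r)\to0$, so $r^{2-\eta}N_\eta(1)/g(r)\to0$ as well.

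Combining the two displays, for every $s>0$,
\begin{eqnarray*}
\int_{|x|>s}|x|^\eta M_t(\rd x)=t\,a_t^\eta\,N_\eta(r_t)=\bigl(s^{\eta-2}\,t\,a_t^2\,g(r_t)\bigr)\cdot\frac{r_t^{2-\eta}N_\eta(r_t)}{g(r_t)}\longrightarrow\frac{s^{\eta-2}}{k}\cdot0=0,
\end{eqnarray*}
which is the claim. I expect the Karamata estimate to be the real obstacle: one needs the \emph{exact} cancellation of the $r^{\eta-2}g(r)$ terms to land on $o(r^{\eta-2}g(r))$ rather than merely $O(r^{\eta-2}g(r))$, which is precisely what forces the rescaled integrals to vanish instead of merely staying bounded; and in the $c=0$ case one must additionally argue that the far tail $N_\eta(1)$, controlled only through \eqref{eq: eta moment}, is negligible against the possibly-vanishing slowly varying function $g$.
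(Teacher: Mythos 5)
Your proposal is correct and follows essentially the same route as the paper: both rest on the Fubini/Tonelli identity expressing the truncated $\eta$-th moment tail in terms of $g(v)=\tr A_v$, followed by Karamata's theorem to force the exact cancellation $N_\eta(r)=o\left(r^{\eta-2}g(r)\right)$, together with $t\,a_t^2\,g(s/a_t)\to 1/k$ coming from the asymptotic inverse of $h$. Your reorganization --- isolating the scale-free tail estimate before reintroducing the normalization, and explicitly splitting off $N_\eta(1)$ in the $c=0$ case --- is only a cosmetic (and arguably cleaner) repackaging of the paper's single chain of equalities.
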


\begin{proof}
Let
$$
U(u) := \int_{|x|\le u}|x|^2 M(\rd x) = \tr A_u
$$
and
$$
U^t(u) := \int_{|x|\le u}|x|^2 M_t(\rd x) = ta_t^2 U(u/a_t).
$$
Note that $U\in RV^c_0$, $a_\bullet\in RV^c_{-1/2}$, and $\lim_{t\to c}a_t=1/c$. By Fubini's Theorem and the fact that $t\sim h(1/(a_t\sqrt k))= [ka_t^2 U(1/(a_t\sqrt k))]^{-1}\sim [ka_t^2 U(1/a_t)]^{-1}$ as $t\to c$ it follows that for any $s>0$
\begin{eqnarray*}
\lim_{t\to c}\int_{|x|>s}|x|^\eta M_t(\rd x) &=& \lim_{t\to c}\left[(2-\eta)\int_{s}^\infty u^{\eta-3} U^t(u) \rd u - s^{\eta-2}U^t(s)\right] \\
&=& \lim_{t\to c} t a_t^2  \left[(2-\eta)\int_{s}^\infty u^{\eta-3} U(u/a_t) \rd u \right.\\
&&\qquad \left. - s^{\eta-2}U(s/a_t)\right]\\
&=& \lim_{t\to c} k^{-1}\left[(2-\eta)\frac{\int_{s}^\infty u^{\eta-3} U(u/a_t) \rd u}{U(1/a_t)} \right.\\
&&\qquad\left.- s^{\eta-2}\frac{U(s/a_t)}{U(1/a_t)}\right]\\
&=& \lim_{t\to c} k^{-1}\left[(2-\eta)\frac{\int_{s/a_t}^\infty u^{\eta-3} U(u) \rd u}{U(s/a_t)(s/a_t)^{\eta-2}}s^{\eta-2} \right.\\
&&\qquad\left.- s^{\eta-2}\frac{U(s/a_t)}{U(1/a_t)}\right]\\
&=& k^{-1}\left(s^{\eta-2} - s^{\eta-2}\right) = 0,
\end{eqnarray*}
where the fourth equality follows by change of variables and the fifth by Karamata's Theorem (see e.g.\ Theorem 2.1 in \cite{Resnick:2007}). Note that Karamata's Theorem still holds when $U\in RV^0_0$ since, in that case, $U(1/\bullet)\in RV^\infty_0$.  We now prove the last statement. Fubini's theorem implies that for any $s>0$
$$
\int_{|x|>s}|x|^\eta M_t(\rd x) = (2-\eta)\int_{s}^\infty u^{\eta-3} U^t(u) \rd u - s^{\eta-2}U^t(s).
$$
The right side is finite by Lemma 2 on Page 277 in \cite{Feller:1971}, and hence the left side must be finite as well. 
\end{proof}

\begin{proof}[Proof of Theorem \ref{thrm: doa of normal for Levy}]
Let $M_t$ be given by \eqref{eq: Mt}, this is the L\'evy measure of $a_tX_t-\xi_t$. 

First assume that $A_\bullet\in MRV^c_0(kB)$ and that $a_t$ is given by \eqref{eq: a for Levy}. This implies that $a_\bullet\in RV^c_{-1/2}$ and
\begin{eqnarray*}
\lim_{t\rightarrow c}\int_{|x|\le1}xx^TM_t(\rd x) &=& \lim_{t\rightarrow c}ta^2_t\int_{|x|\le1/a_t}xx^TM(\rd x)\\
&=& \lim_{t\rightarrow c}k^{-1}\frac{\int_{|x|\le1/a_t}xx^TM(\rd x)}{\int_{|x|\le 1/(a_t\sqrt k)}|x|^2M(\rd x)}\\
&=& \lim_{t\rightarrow c}k^{-1}\frac{\int_{|x|\le1/a_t}xx^TM(\rd x)}{\int_{|x|\le1/a_t}|x|^2M(\rd x)}=B.
\end{eqnarray*}
From here the result follows by Lemmas \ref{lemma: levy measure conv to zero} and \ref{lemma: conv ID}.

Now assume that \eqref{eq: long time for gaus Levy} holds. Lemma \ref{lemma: conv ID} implies that for every $s>0$ 
$$
\lim_{t\to c}M_t(x:|x|>s)=0.
$$
This means that we can use Lemma \ref{lemma: eqiv between deltas}, which combined with Lemma \ref{lemma: conv ID} says that for any $s>0$
$$
\lim_{t\rightarrow c} ta_t^2\int_{|x|\le s/a_t}xx^TM(\rd x) = \lim_{t\rightarrow c} \int_{|x|\le s}xx^TM_t(\rd x)=B.
$$
Thus, for any $s>0$, $\lim_{t\rightarrow c} ta_t^2 U(s/a_t) = \tr B$, where $U(t) = \int_{|x|\le t}|x|^2M(\rd x)$. Lemma \ref{lemma: at conv to 0 or infty} implies that the sequential criterion for regular variation of monotone functions (see e.g.\ Propositions 2.3 in \cite{Resnick:2007}) holds, and hence $U \in RV^c_0$. Note that when $c=0$ we can use the sequential criterion because $U\in RV^0_0$ if and only if $U(1/\bullet)\in RV^\infty_0$. The fact that 
$$
\lim_{t\rightarrow c}\frac{\int_{|x|\le t}xx^TM(\rd x)}{\int_{|x|\le t}|x|^2M(\rd x)} = \lim_{t\rightarrow c}\frac{ta^2_t\int_{|x|\le 1/a_t}xx^TM(\rd x)}{ta_t^2\int_{|x|\le 1/a_t}|x|^2M(\rd x)} =\frac{B}{\tr B},
$$
gives $A_\bullet\in MRV^c_0(kB)$.
\end{proof}

The following technical result is easily verified.

\begin{lemma}\label{lemma: tech}
If $\int_{\mathbb R^d}|x|^2\mu(\rd x)=\infty$ and $\int_{\mathbb R^d}|x|\mu(\rd x)<\infty$ then
\begin{eqnarray*}
\lim_{t\to\infty}\frac{\int_{|x|\le t}xx^T\mu(\rd x)}{\int_{|x|\le t}|x|^2\mu(\rd x)}&=&\lim_{t\to\infty}\frac{\int_{|x|\le t}xx^T\mu(\rd x)-\int_{|x|\le t}x\mu(\rd x)\int_{|x|\le t}x^T\mu(\rd x)}{\int_{|x|\le t}|x|^2\mu(\rd x)}\\
&=&\lim_{t\to\infty}\frac{\int_{|x|\le t}xx^T\mu(\rd x)-\int_{|x|\le t}x\mu(\rd x)\int_{|x|\le t}x^T\mu(\rd x)}{\int_{|x|\le t}|x|^2\mu(\rd x)-\int_{|x|\le t}x^T\mu(\rd x)\int_{|x|\le t}x\mu(\rd x)}
\end{eqnarray*}
so long as at least one of the limits exists.
\end{lemma}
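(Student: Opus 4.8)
The plan is to reduce all three ratios to the first one by checking that each successive modification of numerator and denominator is asymptotically negligible. Write $N_t := \int_{|x|\le t} xx^T\mu(\rd x)$, $v_t := \int_{|x|\le t} x\,\mu(\rd x)$, and $D_t := \int_{|x|\le t}|x|^2\mu(\rd x)$, so that $D_t = \tr N_t$ and the three ratios in the statement are $N_t/D_t$, $(N_t - v_tv_t^T)/D_t$, and $(N_t - v_tv_t^T)/(D_t - v_t^Tv_t)$. Two elementary facts drive everything: since $\int_{\mathbb R^d}|x|\mu(\rd x)<\infty$, dominated convergence gives that $v_t$ converges to $m := \int_{\mathbb R^d} x\,\mu(\rd x)$ as $t\to\infty$, so the matrix $v_tv_t^T$ and the scalar $v_t^Tv_t = |v_t|^2$ are bounded in $t$ (converging to $mm^T$ and $|m|^2$ respectively); and since $\int_{\mathbb R^d}|x|^2\mu(\rd x)=\infty$, we have $D_t\to\infty$.

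For the first equality I would write
$$
\frac{N_t}{D_t} \;=\; \frac{N_t - v_tv_t^T}{D_t} \;+\; \frac{v_tv_t^T}{D_t},
$$
and observe that the last term converges to the zero matrix (entrywise, hence in any norm) because $v_tv_t^T$ is bounded while $D_t\to\infty$. Thus the first and second ratios differ by a quantity tending to $0$, so one of them converges if and only if the other does, and then to the same limit.

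For the second equality, I would first note that Cauchy--Schwarz gives $|v_t|^2 = \left|\int_{|x|\le t} x\,\mu(\rd x)\right|^2 \le \mu(\{|x|\le t\})\int_{|x|\le t}|x|^2\mu(\rd x)\le D_t$, so $D_t - v_t^Tv_t\ge 0$; since $v_t^Tv_t$ is bounded while $D_t\to\infty$, in fact $D_t - v_t^Tv_t$ is strictly positive for all large $t$ and $D_t/(D_t - v_t^Tv_t)\to 1$. Then
$$
\frac{N_t - v_tv_t^T}{D_t - v_t^Tv_t} \;=\; \frac{N_t - v_tv_t^T}{D_t}\cdot\frac{D_t}{D_t - v_t^Tv_t},
$$
a product of the second ratio with a scalar factor converging to $1$, so the second ratio converges if and only if the third does, again to the same limit. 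Chaining the two equivalences yields the assertion that if any one of the three limits exists then all three exist and are equal.

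The argument is entirely elementary, consistent with the paper's remark that it is easily verified, so I do not anticipate a genuine obstacle; the most delicate point is the use of the Cauchy--Schwarz bound to certify that the third denominator is eventually positive, and one should also keep in mind the routine facts that convergence of $v_tv_t^T/D_t$ to $0$ is meant entrywise (harmless in finite dimension) and that a sequence whose product with a scalar sequence tending to $1$ converges must itself converge to the same limit.
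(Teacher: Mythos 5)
Your proof is correct. The paper gives no argument for this lemma, stating only that it is ``easily verified,'' and your verification is exactly the intended one: $v_t\to\int_{\mathbb R^d}x\,\mu(\rd x)$ by dominated convergence so $v_tv_t^T$ stays bounded, while $D_t\to\infty$, which makes the correction terms in numerator and denominator asymptotically negligible. (The Cauchy--Schwarz step is fine but not even needed, since $D_t\to\infty$ and $v_t^Tv_t$ bounded already force $D_t-v_t^Tv_t>0$ eventually.)
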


\begin{proof}[Proof of Theorem \ref{thrm: doa normal for CLT}.] 
By Corollary 3.2.15 in \cite{Meerschaert:Scheffler:2001} \eqref{eq: long time for gaus CLT} holds if and only if for any $\epsilon>0$
\begin{eqnarray}\label{eq: mu to zero}
n\mu(\{x:|x|>\epsilon/a_n\})\to 0
\end{eqnarray}
and
\begin{eqnarray}\label{eq: trunc conv matrix}
\lim_{n\to\infty}n a_n^2\left[\int_{|x|\le 1/a_t}xx^T\mu(\rd x)-\int_{|x|\le 1/a_t}x\mu(\rd x)\int_{|x|\le 1/a_t}x^T\mu(\rd x)\right]=B.
\end{eqnarray}
We will show that this is equivalent to $A_\bullet\in MRV^\infty_0(kB)$.

We begin with the case when $\int_{\mathbb R^d}|x|^2 \mu(\rd x)=\infty$. First assume that $A_\bullet\in MRV^\infty_0(kB)$ and let $a_n$ be defined by \eqref{eq: a CLT}.  In this case, Lemma \ref{lemma: levy measure conv to zero} implies that \eqref{eq: mu to zero} holds and $\int_{\mathbb R^d}|x|\mu(\rd x)<\infty$.
By Lemma \ref{lemma: tech}
\begin{eqnarray*}
&&\lim_{n\to\infty}n a_n^2\left[\int_{|x|\le 1/a_n}xx^T\mu(\rd x)-\int_{|x|\le 1/a_n}x\mu(\rd x)\int_{|x|\le 1/a_n}x^T\mu(\rd x)\right]\\
&&=\lim_{n\to\infty}k^{-1}\frac{\int_{|x|\le 1/a_n}xx^T\mu(\rd x)-\int_{|x|\le 1/a_n}x\mu(\rd x)\int_{|x|\le 1/a_n}x^T\mu(\rd x)}{\int_{|x|\le 1/ (a_n\sqrt k)}|x|^2\mu(\rd x)}\\
&&=\lim_{n\to\infty}k^{-1}\frac{\int_{|x|\le 1/a_n}xx^T\mu(\rd x)-\int_{|x|\le 1/a_n}x\mu(\rd x)\int_{|x|\le 1/a_n}x^T\mu(\rd x)}{\int_{|x|\le 1/a_n}|x|^2\mu(\rd x)}\\
&&=\lim_{n\to\infty}k^{-1}\frac{A_{1/a_n} }{\tr A_{1/a_n}}=B.
\end{eqnarray*}
Conversely, assume that \eqref{eq: long time for gaus CLT} holds for some sequence $a_n$. From univariate results (see e.g.\ \cite{Feller:1971}), it follows that $a_n\to0$, $a_n/a_{n+1}\to1$, and $\int_{\mathbb R^d}|x|\mu(\rd x)<\infty$. From \eqref{eq: trunc conv matrix} we get
\begin{eqnarray*}
\tr B &=& \lim_{n\to\infty}n a_n^2\left[\int_{|x|\le 1/a_t}|x|^2\mu(\rd x)-\int_{|x|\le 1/a_t}x^T\mu(\rd x)\int_{|x|\le 1/a_t}x\mu(\rd x)\right]\\
&=& \lim_{n\to\infty}n a_n^2\int_{|x|\le 1/a_t}|x|^2\mu(\rd x).
\end{eqnarray*}
From here, by agruments similar to those in the proof of Theorem \ref{thrm: doa of normal for Levy}, we get $\int_{|x|\le \bullet}xx^T\mu(\rd x)\in MRV^\infty_0(kB)$. By Lemma \ref{lemma: tech} 
this implies that $A_\bullet\in MRV^\infty_0(kB)$ as well.

Now consider the case when $\int_{\mathbb R^d}|x|^2 \mu(\rd x)<\infty$. Let
$$
B'= \frac{\int_{\mathbb R^d}xx^T\mu(\rd x)-\int_{\mathbb R^d}x\mu(\rd x)\int_{\mathbb R^d}x^T\mu(\rd x)}{\int_{\mathbb R^d}|x|^2\mu(\rd x)-\int_{\mathbb R^d}x^T\mu(\rd x)\int_{\mathbb R^d}x\mu(\rd x)}.
$$
By dominated convergence $A_\bullet\in MRV^\infty_0(B')$. Now let $a_n\sim c^{1/2}n^{-1/2}$ where $c^{1/2}=\eta^{-1/2}\left[\int_{\mathbb R^d}|x|^2\mu(\rd x)-\int_{\mathbb R^d}x^T\mu(\rd x)\int_{\mathbb R^d}x\mu(\rd x)\right]^{-1/2}$ for some $\eta>0$. By Markov's inequality, for any $\epsilon>0$
\begin{eqnarray*}
\lim_{n\to\infty}n\mu(x:|x|>\epsilon/a_n) &\le& \lim_{n\to\infty} n\frac{a_n^2}{\epsilon^2}\int_{|x|>\epsilon/a_n}|x|^2\mu(\rd x)\\
&=& \lim_{n\to\infty} \frac{c}{\epsilon^2}\int_{|x|>\epsilon/a_n}|x|^2\mu(\rd x)=0.
\end{eqnarray*}
By dominated convergence
\begin{eqnarray*}
&&\lim_{n\to\infty}n a_n^2\left[\int_{|x|\le 1/a_n}xx^T\mu(\rd x)-\int_{|x|\le 1/a_n}x\mu(\rd x)\int_{|x|\le 1/a_n}x^T\mu(\rd x)\right]\\
&&=\eta^{-1}\frac{\int_{\mathbb R^d}xx^T\mu(\rd x)-\int_{\mathbb R^d}x\mu(\rd x)\int_{\mathbb R^d}x^T\mu(\rd x)}{\int_{\mathbb R^d}|x|^2\mu(\rd x)-\int_{\mathbb R^d}x^T\mu(\rd x)\int_{\mathbb R^d}x\mu(\rd x)}=\eta^{-1}B'.
\end{eqnarray*} 
This implies that \eqref{eq: long time for gaus CLT} holds for $B=\eta^{-1}B'$ and some sequence $\xi_n$. Since $\tr B'=1$, $\eta^{-1}=\tr B$ and the result follows.
\end{proof}

\begin{proof}[Proof of Corollary \ref{cor: nec and suf CLT}] It suffices to show that $A_\bullet\in MRV^\infty_0(B)$ if and only if $A_\bullet'\in MRV^\infty_0(B')$. If $\int_{\mathbb R^d}|x|^2 \mu(\rd x)<\infty$ the result is immediate.  When $\int_{\mathbb R^d}|x|^2 \mu(\rd x)=\infty$ the result follows from Lemma 
\ref{lemma: tech}. The fact that, in both directions, $\int_{\mathbb R^d}|x| \mu(\rd x)<\infty$ can be shown as in the proof of Theorem \ref{thrm: doa normal for CLT}.
\end{proof}


\begin{thebibliography}{10}

\bibitem{Balkema:1973}
A.~A. Balkema.
\newblock {\em Monotone Transformations and Limit Laws}.
\newblock Mathematisch Centrum, Amsterdam, 1973.

\bibitem{Bingham:Goldie:Teugels:1987}
N.~H. Bingham, C.~M. Goldie, and J.~L. Teugels.
\newblock {\em Regular Variation}.
\newblock Encyclopedia of Mathematics And Its Applications. Cambridge
  University Press, Cambridge, 1987.

\bibitem{Doney:Maller:2002}
R.~A. Doney and R.~A. Maller.
\newblock Stability and attraction to normality for {L}\'evy processes at zero
  and infinity.
\newblock {\em Journal of Theoretical Probability}, 15:751--792, 2002.

\bibitem{Feller:1971}
W.~Feller.
\newblock {\em An Introduction to Probability Theory and Its Applications
  Volume II}.
\newblock John Wiley \& Sons, Inc., New York, 2nd edition, 1971.

\bibitem{Grabchak:2013}
M.~Grabchak.
\newblock Inversions of {L}\'evy measures and the relation between long and
  short time behavior of {L}\'evy processes.
\newblock {\em Journal of Theoretical Probability}, 2013.
\newblock DOI 10.1007/s10959-012-0476-6.

\bibitem{Meerschaert:Scheffler:2001}
M.~M. Meerschaert and H.~Scheffler.
\newblock {\em Limit Distributions for Sums of Independent Random Vectors:
  Heavy Tails in Theory and Practice}.
\newblock John Wiley \& Sons, New York, 2001.

\bibitem{Resnick:2007}
S.~I. Resnick.
\newblock {\em Heavy-Tail Phenomena: Probabilistic and Statistical Modeling}.
\newblock Springer, 2007.

\bibitem{Rosinski:2007}
J.~Rosi{\'n}ski.
\newblock Tempering stable processes.
\newblock {\em Stochastic Processes and their Applications}, 117(6):677--707, 2007.

\bibitem{Rvaceva:1962}
E.~L. Rva\v{c}eva.
\newblock On domains of attraction of multi-dimensional distributions.
\newblock In {\em Selected Translations in Mathematical Statistics and
  Probability Vol. 2}, pages 183--205. American Mathematical Society,
  Providence, 1962.
\newblock Translated by S. G. Ghurye.

\bibitem{Sato:1999}
K.~Sato.
\newblock {\em {L}\'evy Processes and Infinitely Divisible Distributions}.
\newblock Cambridge University Press, Cambridge, 1999.

\end{thebibliography}
\end{document}